\newcommand{\oN}{\mathbb{N}} 
\newcommand{\oZ}{\mathbb{Z}}
\newcommand{\oC}{\mathbb{C}} 
\newcommand{\oCP}{\mathbb{C}\kern-1pt P}
\newcommand{\q}{\mathfrak{q}}
\newcommand{\z}{\mathbf{z}}
\newcommand{\dd}{{\partial\kern-7.4pt\partial\kern-7.6pt\partial\kern2pt}}
\newcommand{\W}{\mathsf{W}}
\newcommand{\theg}{\mathfrak{g}}
\newcommand{\thega}{\hat{\mathfrak{g}}}
\newcommand{\theh}{\mathfrak{h}}
\newcommand{\theG}{\bf{G}}
\newcommand{\theP}{\bf{P}}
\newcommand{\theF}{\bf{F}}
\newcommand{\theb}{\mathfrak{b}}
\newcommand{\theB}{\bf{B}}
\newcommand{\theH}{\bf{H}}
\newcommand{\theX}{\bf{X}}
\newcommand{\sroots}{\mathbf{\Pi}}
\newcommand{\roots}{\mathbf{\Delta}}
\newcommand{\voa}[1][X]{\mathcal{L}\!{#1}}
\newcommand{\voahr}[1][X]{\mathcal{W}\!{#1}}
\newcommand{\voaw}[1][X]{\mathcal{WL}\!{#1}}
\newcommand{\modV}{\mathfrak{V}}
\newcommand{\modX}{\mathfrak{X}}
\newcommand{\modF}{\mathfrak{F}}
\newcommand{\modM}{\mathcal{M}}
\newcommand{\oneb}{\mathbf{1}\kern-5pt\mathbf{1}}
\newcommand{\ad}{{\rm ad}}
\newcommand{\Tr}{{\rm Tr}}
\newtheorem{Thm}{Theorem}[section]
\newtheorem{Lemma}[Thm]{Lemma}
\newtheorem{Prop}[Thm]{Proposition}
\theoremstyle{definition}
\newtheorem{Dfn}[Thm]{Definition}
\newtheorem{Rem}[Thm]{Remark}%[section]
\newcommand{\scalar}[2]{(#1\,,\,#2)}
\newcommand{\commut}[2]{[#1\,,\,#2]}
\newcommand{\qad}{{\mathsf{qad}}}
\newcommand{\half}{\frac{1}{2}}
\begin{document}

\title[Logarithmic CFTs and root systems]{% 
  Logarithmic CFTs connected with simple Lie algebras\\
  {\tiny Simply-laced case}}

 \author{B.L.~Feigin, and I.Yu.~Tipunin}
 \address{BLF:Higher School of Economics, Moscow, Russia and Landau institute for Theoretical Physics,
 Chernogolovka, 142432, Russia}
 \email{bfeigin@gmail.com}
 \address{IYuT:Tamm Theory Division, Lebedev Physics Institute, Leninski pr., 53,
 Moscow, Russia, 119991}
 \email{tipunin@gmail.com}

\begin{abstract}
 For any root system corresponding to a semisimple simply-laced Lie algebra 
a logarithmic CFT is constructed. Characters of irreducible representations were 
calculated in terms of theta functions.
\end{abstract}

\maketitle

\section{Introduction}
The most deeply investigated examples of LCFTs are $(p,p')$ models introduced in~\cite{PRZ}.
The chiral algebra of these models~\cite{FGST3} is based on the $s\ell(2)$ symmetry discovered 
in~\cite{Kausch}. In the paper, we generalize a construction of a chiral algebra to the case
of a semisimple simply laced Lie algebra.

We suggest a method to construct LCFTs from the following data. Let $\theB$ be a Lie
group, $\theX$ be a space with an action of $\theB$ on it and let $\mathcal{L}$ be a vertex operator algebra
in which $\theB$ acts by symmetries of OPEs. 
We construct a bundle $\xi=\mathcal{L}\otimes_{\theB}\theX$ with fibers $\mathcal{L}$.
Then the cohomology of $\xi$ bear a VOA structure. This VOAs are main object
of investigation in this paper. We assume notation $\mathcal{W}$ for this VOA. On this way we obtain VOAs 
that generalize $W$-algebras of $(1,p)$ 
models~\cite{FHST,GabKausch,Flohr1}. Generalization of $(p,p')$ models from~\cite{FGST3}
also can be obtained on this way.

In the paper we consider the case where $\theB$ is the Borel subgroup in a simply laced Lie
group~$\theG$, the space $\theX$ is the group~$\theG$ itself and $\mathcal{L}$ is a lattice VOA
in which $\theB$ acts by screening operators. In this case only zero-dimension cohomologies
of the sheaf $\xi$ are nonzero and therefore we can identify global sections of $\xi$
with a vertex operator algebra~$\mathcal{W}$. 
Irreducible modules of~$\mathcal{W}$ can be constructed as cohomology of some bundles
on~$\theG/\theB$. We consider the 
sheaf of sections of the bundle $\xi(\modV)=\modV\otimes_{\theB}\theG$, where
$\modV$ is an irreducible $\mathcal{L}$-module.  We obtain irreducible modules 
of $\mathcal{W}$ as global sections of the bundle~$\xi(\modV)$. 

Most of statements in the paper can be proved using results of~\cite{Arakawa}
on W-algebras.

In the case where ${\theG}=SL(2)$ the algebra $\mathcal{W}$ is described as a kernel
of the screening~\cite{FHST} and its irreducible modules are described in the similar
way. This allows us to calculate characters from the Felder complex~\cite{Felder}.
The definition with screenings can be generalized to any root system. The algebra $\mathcal{W}$
coincides with the intersection of screenings kernels in the vacuum module of~$\mathcal{L}$.

The construction in terms of the bundle $\xi$ 
allows us to calculate characters of irreducible modules using the Lefschetz
formula. The equivariant Euler characteristics of $\xi$
is 
\begin{equation}\label{eq:lefshets}
 \sum_{i\geq0}(-1)^i\Tr_{H^i}(e^h q^{L_0})=\sum_{x\in S}
\Tr_{\xi_x\otimes\mathcal{F}_x}(e^h q^{L_0}),
\end{equation}
where $S$ is the set of fixed points of the standard torus action on the flag 
manifold~$\theG/\theB$,
$h$ is an element of the Cartan subalgebra, $L_0$ is the element of the Virasoro
subalgebra in~$\voa$, $H^i$ is i-th cohomology of $\xi$, $\xi_x$
is the fiber of $\xi$ at the point $x$ and $\mathcal{F}_x$ is the ring of formal series in 
a neighborhood of~$x$. 
Actually, all cohomology excepting $H^0$ are equal to zero and therefore the left hand side
of~(\ref{eq:lefshets}) is equal to the character of the irreducible $\voaw$-module.
Thus, we obtain expressions for the characters in terms of theta functions from~\cite{Kac-inf}
\begin{equation}\label{chi-lefshetz}
 \chi_\lambda(q)=\frac{1}{\eta(q)^\ell}\sum_{\omega\in\Gamma^{\vee+}}
  \frac{\zeta_\omega}{\prod_{i=1}^\ell\left(\scalar{\alpha_i}{\omega}!\right)}
   \Theta^\omega_\lambda(q)
\end{equation}
where
\begin{equation}
 \Theta^{\omega}_\lambda(q)=\left.\dd^\omega\Theta_\lambda(q,\z)\right|_{\z=1}
\end{equation}
is partial derivative of the theta function and
\begin{equation}
 \zeta_\omega=
 \sum_{\mu\in\Gamma^{\vee}}(-1)^{\scalar{\mu}{\rho^\vee}}
 \prod_{j=1}^\ell\left({\scalar{\alpha_j}{\omega}\atop\scalar{\alpha_j}{\mu}}\right)
  \prod_{\alpha\in\Delta^+}\left(1+\frac{\scalar{\alpha}{\mu}}{\scalar{\alpha}{\rho}}\right),
   \qquad\omega\in\Gamma^{\vee+}.
\end{equation}
See explanations of the notation in Sec.~\ref{sec:char}.
In the case ${\theG}=SL(2)$, (\ref{chi-lefshetz}) coincides with formulas for characters of irreducible 
$W$-modules in~\cite{FHST,Flohr2}.

The paper is organized as follows. In section~\ref{sec:prelim}, we introduce notations and 
recall known facts about vertex operator algebras. In section~\ref{sec:screenings}, we 
introduce screening operators. In section~\ref{sec:thevoa}, we introduce the main object of 
our investigation the vertex operator algebra $\voaw$ and describe its irreducible modules
as cohomologies of some bundles on homogeneous space. In section~\ref{sec:QG}, we introduce
a quantum group that presumably centralizes $\voaw$. In section~\ref{sec:char}, we calculate
characters of $\voaw$ irreducible modules.

\section{Preliminaries\label{sec:prelim}}
Let $\theg$ be a simply-laced semisimple Lie algebra of rank $\ell$, $\theh$ and $\theb$ be its Cartan
and Borel subalgebras respectively. Let $\theG$, $\theH$ and $\theB$ be Lie groups corresponding 
to $\theg$, $\theh$ and $\theb$ respectively.
Let $\Gamma$ be the root lattice, $\sroots$ be the set of simple positive roots
$\alpha_i$, $i=1,2,\dots,\ell$ and $\roots$ be the set of roots.
Let $\scalar{\cdot}{\cdot}$ be standard scalar 
product in $\theh^*$, 
$c_{ij}=\frac{2\scalar{\alpha_i}{\alpha_j}}{\scalar{\alpha_j}{\alpha_j}}$
%=\scalar{\alpha_i}{\alpha_j}$ 
be the Cartan matrix, which in the simply-laced case coincides with
the Gramm matrix and $c^{ij}$ be the inverse matrix to~$c_{ij}$.
Let $\rho$, $\scalar{\rho}{\alpha_i}=1$, $i,1,\dots,\ell$ be the half of the sum of positive roots.
Let $\omega_i$, $i=1,2,\dots,\ell$ be fundamental weights $\scalar{\omega_i}{\alpha_j}=\delta_{ij}$.
Let $\Gamma^\vee$ be the weight lattice. 
We set $\Omega=\Gamma^\vee/\Gamma$. We set $\gamma=|\Omega|$ the order of the group $\Omega$.

We choose representatives of the elements from the abelian group $\Omega$ in~$\Gamma^\vee$
in the following way.
For algebras $A_\ell$ we choose $0$, $\omega_i$ with $i=1,\dots,\ell$; for $D_\ell$ 
we choose $0$, $\omega_1$, $\omega_{\ell-1}$ and $\omega_\ell$; 
for $E_6$ we choose $0$, $\omega_1$ and $\omega_3$; for $E_7$ we choose $0$, $\omega_2$
 and for $E_8$ the group $\Omega$ contains only~$0$.

We consider the free scalar fields $\varphi_\alpha(z)$ for $\alpha\in\Gamma$ with the OPE
\begin{equation}\label{the-phi-ope}
 \varphi_\alpha(z)\varphi_\beta(w)=\scalar{\alpha}{\beta}\log(z-w).
\end{equation}
We note that between $\varphi_\alpha(z)$ there are $\ell$ linearly independent.
We also use notation $\varphi_i(z)=\varphi_{\alpha_i}(z)$, $i=1,\dots,\ell$.
We assume the mode decomposition
\begin{equation}
 \varphi_\alpha(z)=(\bar\varphi_\alpha)_0+(\varphi_\alpha)_0\log z
  -\sum_{n\neq0}\frac{1}{n}(\varphi_\alpha)_n z^{-n}.
\end{equation}
In order to have correct commutation relations between screening operators we introduce
nontrivial bracket of the constant modes of $\varphi_\alpha$
\begin{equation}\label{c-mode-comm}
 [(\bar\varphi_i)_0,(\bar\varphi_j)_0]=b_{ij},
\end{equation}
where
\begin{equation}
 b_{ij}=-b_{ji}=\begin{cases}
         1,\quad\mbox{$i<j$ and $i$-th and $j$-th nodes are connected in the Dynkin diagram},\\
         0,\quad\mbox{otherwise.}
        \end{cases}
\end{equation}
We define a bilinear form $\{\alpha,\beta\}$ on $\theh^*$ in the basis of simple roots
\begin{equation}\label{new-scalar}
 \{\alpha_i,\alpha_j\}=c_{ij}+b_{ij}.
\end{equation}
We note that this bilinear form is not symmetric nor antisymmetric.
We fix an integer $p\geq2$ and introduce the set of vertex operators
\begin{equation}\label{vertex}
 V_\lambda(z)=e^{\frac{1}{\sqrt{p}}\varphi_\lambda(z)},\quad \lambda\in\Gamma^\vee.
\end{equation}
After changing the commutation relations of the constant modes~(\ref{c-mode-comm}), we have
the following braiding of vertex operators
\begin{equation}\label{new-braid}
 V_\lambda(z)V_\mu(w)\sim \q^{\{\lambda,\mu\}}V_\mu(w)V_\lambda(z),
\end{equation}
where
\begin{equation}\label{theq}
 \q=e^{\frac{\pi i}{p}}.
\end{equation}
\begin{Rem}
Usually vertex operators are defined by (\ref{vertex}) with $[(\bar\varphi_i)_0,(\bar\varphi_j)_0]=0$,
which gives the braiding in the standard form~$\q^{\scalar{\lambda}{\mu}}$. After modification 
the braiding is changed.
We note that the monodromy of the vertex operators doesn't change and is equal 
to~$\q^{2\scalar{\lambda}{\mu}}$. See a similar construction in~\cite{Frenkel-Kac}.
\end{Rem}

We consider the lattice vertex operator algebra $\voa_\ell(p)$ 
corresponding to $\bar\Gamma=\sqrt{p}\,\Gamma$, 
where $\Gamma$ is the root lattice of the semisimple algebra of the type $X$ equals to $A$, $D$
or~$E$. In what follows to simplify notation we often write $\voa$.
The basis in $\voa$ consists of elements $P(\partial\varphi_\beta)V_{p\alpha}(z)$,
where $\alpha\in\Gamma$ and $P$ is a differential polynomial.
We choose the energy-momentum tensor of $\voa$ in the form
 \begin{equation}\label{theemt}
  T(z)=\half c^{ij}\partial\varphi_i(z)\partial\varphi_j(z)
  +Q_0\partial^2\varphi_\rho(z),
 \end{equation}
where
\begin{equation}
 Q_0=\sqrt{p}-\frac{1}{\sqrt{p}}.
\end{equation}
We choose the nonstandard background charge in order to have the energy-momentum tensor
commuting with screening operators, which are introduced in the next section.
The central charge is
\begin{equation}
 c=\ell+12\scalar{\rho}{\rho}(2-p-\frac{1}{p})=\ell+h\dim\!\theg\,(2-p-\frac{1}{p}),
\end{equation}
where $h$ is the Coxeter number of $\theg$.

The conformal dimension $\Delta_\lambda$ of the vertex operator $e^{\varphi_\lambda}$ with $\lambda\in\bar\Gamma^\vee$ 
is given by the expression
\begin{equation}\label{energy}
 \Delta_\lambda=\half\scalar{\lambda-Q_0\rho}{\lambda-Q_0\rho}+\frac{c-\ell}{24}.
\end{equation}

The $\voa_\ell(p)$ irreducible modules are enumerated by elements of the abelian 
group $\Lambda=\bar\Gamma^\vee/\bar\Gamma$.
We choose the basis $\lambda_j=\frac{1}{\sqrt{p}}\omega_j$, $j=1,2,\dots,\ell$ in $\bar\Gamma^\vee$.
 For each equivalence class $<\lambda>\in\Lambda$, a unique representative
  $\lambda\in\bar\Gamma^\vee$ of the form
\begin{equation}\label{canon-rep}
 \lambda=\sqrt{p}\omega+\sum_{j=1}^\ell(1-s_j)\lambda_j,
\end{equation}
where $\omega\in\Omega$ and $s_j=1,2,\dots,p$
can be chosen (See description of the representatives in the second paragraph of this section.). 
 We call the form (\ref{canon-rep}) of representatives canonical and use notation
\begin{align}
 \hat\lambda&=\omega,\label{hlamb}\\
 \bar\lambda&=\sum_{j=1}^\ell(1-s_j)\lambda_j.\label{blamb}
\end{align}
We note that $\Lambda$ can be described as an Abelian group with generators $\mu^i$ and
relations $\sum_{j=1}^\ell pc_{ij}\mu^j=0$. For $A_2$ root system and $p=3$ the group $\Lambda$ 
generators are shown in the diagram from Appendix~\ref{app:lambda}.

Let $\modF_\alpha$, $\alpha\in\Gamma^\vee$ be the Fock module corresponding to the 
vertex $V_\alpha(z)$. We set
\begin{equation}
 \modV_{<\lambda>}=\bigoplus_{\alpha\in\Gamma}\modF_{\lambda+\alpha}.
\end{equation}
Then, $\modV_{<\lambda>}$ for $<\lambda>\in\Lambda$ is an irreducible module of~$\voa_\ell(p)$~\cite{Kac-vertex,Frenkel-BenZui}.

\section{Screening operators\label{sec:screenings}}
We consider the screening operators
\begin{equation}\label{e-opers}
 e_i=\frac{1}{2\pi i}\oint dz e^{\sqrt{p}\varphi_i(z)}
\end{equation}
and
\begin{equation}\label{F-opers}
 F_i=\frac{1}{2\pi i}\oint dz e^{-\frac{1}{\sqrt{p}}\varphi_i(z)}
\end{equation}
which  commutes with the Virasoro algebra~(\ref{theemt}). 
We note that because (\ref{new-braid}) operators $e_i$ commute with~$F_j$ for~$i,j=1,\dots,\ell$.

We note that $e_i$ acts in each $\modV_\lambda$. To define the space
in which operators $F_i$ act we should  introduce dressed vertex operators~\cite{Felder}.
In this paper we need only that $F_i$ is a well defined operator 
from $\modV_0$ to $\modV_{-\lambda_i}$.

Actually, the subalgebra of $\voa_\ell(p)$ consisting of zero momentum fields
commuting with screenings (\ref{e-opers}) is the $W$-algebra $\voahr_\ell(p)$
obtained by Hamiltonian reduction from affine algebra $\thega_k$ with
\begin{equation}
 p=k+h.
\end{equation}
A description of the $\voahr_\ell(p)$ representation category can be found in \cite{Arakawa}.

\section{The vertex operator algebra $\voaw_\ell(p)$\label{sec:thevoa}}
In the section, we define the main object of the paper the vertex operator algebra $\voaw_\ell(p)$,
where $X$ means $A$, $D$ or $E$ type of simply-laced semisimple Lie algebra.
Thus we have $\voaw[A]_\ell(p)$ for $\ell=1,2,3,\dots$, $\voaw[D]_\ell(p)$ for $\ell=4,5,6,\dots$
and $\voaw[E]_\ell(p)$ for $\ell=6,7,8$. The vacuum representation of the 
algebra $\voaw_\ell(p)$ can be defined as an 
intersection of kernels of operators $F_i$ in~$\modV_0$.
 We give another definition of $\voaw_\ell(p)$ in terms of operators~$e_i$.

\subsection{The action of $\theb$ in the irreducible $\voa_\ell(p)$-modules.}
\begin{Thm}\label{b-action}
\begin{enumerate}
 \item \label{e-h} 
 The $\voa_\ell(p)$ module $\modV_\lambda$ for $\lambda\in\Lambda$ admits the action of $\theb$
 given by the standard generators
  \begin{align}\label{e-on-V}
   e_i&=\frac{1}{2\pi i}\oint dz e^{\sqrt{p}\varphi_i(z)},\\
   h_i&=\frac{1}{2i\pi\sqrt{p}}\oint dz\partial\varphi_i-\frac{1}{\sqrt{p}}\scalar{\alpha_i}{\bar\lambda}
   + \scalar{\alpha_i}{\mu}\label{h-on-V}
  \end{align}
with $i=1,\dots,\ell$, $\mu\in\Gamma^\vee$ and $\bar\lambda$ defined in~(\ref{blamb}).
\item \label{B-integrate} The action of the Borel subalgebra $\theb$ given in part~(\ref{e-h})
of the Theorem is integrated to the action of $\theB$ in~$\modV_\lambda$.
\end{enumerate}
\end{Thm}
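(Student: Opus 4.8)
\emph{Proof proposal.} The plan is to treat the two parts separately: part~(\ref{e-h}) is a direct verification that the listed operators satisfy the defining relations of $\theb$, and part~(\ref{B-integrate}) is an integrability argument based on local finiteness of the representation.

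For part~(\ref{e-h}) I would first check that the operators are well defined on $\modV_\lambda=\bigoplus_\alpha\modF_{\lambda+\alpha}$: the screening $e_i$ is a single-valued contour integral because on each Fock summand the OPE exponent of $e^{\sqrt p\varphi_i(z)}$ against the state is an integer, and $h_i$ is the momentum zero mode $\tfrac1{\sqrt p}(\varphi_i)_0$ up to a scalar. Then I would verify the relations of $\theb=\mathfrak h\ltimes\mathfrak n_+$. The relation $[h_i,h_j]=0$ is immediate, since momentum zero modes commute and the modification~(\ref{c-mode-comm}) touches only the position zero modes $(\bar\varphi_i)_0$. The relation $[h_i,e_j]=c_{ij}e_j$ follows by taking a residue in the OPE $\partial\varphi_i(z)e^{\sqrt p\varphi_j(w)}\sim\frac{\sqrt p\,\scalar{\alpha_i}{\alpha_j}}{z-w}e^{\sqrt p\varphi_j(w)}$ and using $\scalar{\alpha_i}{\alpha_j}=c_{ij}$ in the simply-laced case; the scalar parts of $h_i$ do not contribute. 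Finally, for the Serre relations $(\ad e_i)^{1-c_{ij}}e_j=0$: when nodes $i,j$ are not joined, $\{\alpha_i,\alpha_j\}=c_{ij}+b_{ij}=0$, so by~(\ref{new-braid}) the currents $e^{\sqrt p\varphi_i}$ and $e^{\sqrt p\varphi_j}$ literally commute and, the OPE being regular, $[e_i,e_j]=0$; when they are joined, $[e_i,e_j]$ is represented by the field $\oint P(\partial\varphi)\,e^{\sqrt p(\varphi_i+\varphi_j)}$ with $P$ of conformal weight $p-1$, and in the further OPE with $e^{\sqrt p\varphi_i(z)}$ the factor $(z-w)^{p}$ coming from the exponentials is cancelled by at most $(z-w)^{-(p-1)}$ from the contractions with $P$, leaving an $O(z-w)$ singularity with no residue, so $[e_i,[e_i,e_j]]=0$. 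Since these are precisely the defining relations of $\theb$ (the positive half of the Serre presentation of $\theg$), the assignment extends to a Lie algebra homomorphism $\theb\to\mathrm{End}\,\modV_\lambda$.

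For part~(\ref{B-integrate}) the key observation is that $e_i$ and $h_i$ all commute with $L_0$: the screening current $e^{\sqrt p\varphi_i}$ has conformal weight $1$ by~(\ref{energy}) (with $Q_0=\sqrt p-\tfrac1{\sqrt p}$), so $e_i$ is its weight-zero mode, and $h_i=\tfrac1{\sqrt p}(\varphi_i)_0$ up to a constant. Hence $\modV_\lambda=\bigoplus_N(\modV_\lambda)_N$ is a decomposition into $\theb$-submodules, each finite-dimensional, because $\Delta_\mu\to\infty$ as $\mu$ runs over the lattice so only finitely many Fock summands meet a given $L_0$-level, and each meets it in a finite-dimensional space. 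On each $(\modV_\lambda)_N$ the operators $e_i$ are nilpotent (they shift the momentum by $\sqrt p\alpha_i$ while preserving $L_0$, and only finitely many momenta occur), so $\mathfrak n_+$ acts by nilpotent operators; and $\mathfrak h$ acts semisimply, by a scalar on each $\modF_{\lambda+\alpha}$. Moreover a direct computation using~(\ref{canon-rep}) shows this scalar is an integer: the correction term $-\tfrac1{\sqrt p}\scalar{\alpha_i}{\bar\lambda}$ is designed to cancel exactly the non-integral contribution of $\bar\lambda$ to $\tfrac1{\sqrt p}(\varphi_i)_0$, while $\scalar{\alpha_i}{\mu}$ with $\mu\in\Gamma^\vee$ is itself an integer and accounts for the freedom in the choice of integral form. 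A finite-dimensional $\theb$-module with $\mathfrak n_+$ acting nilpotently and with $\mathfrak h$-weights integral (in the character lattice of $\theH$) integrates uniquely to an algebraic representation of $\theB=\theH\ltimes N_+$: one exponentiates the nilpotent operators to obtain the $N_+$-action, exponentiates the diagonalizable $h_i$ to obtain the $\theH$-action, and the relation $[h_i,e_j]=c_{ij}e_j$ makes these compatible with the semidirect-product structure. Since $\modV_\lambda$ is the direct sum of the $(\modV_\lambda)_N$, these assemble to a $\theB$-action on all of $\modV_\lambda$ extending the $\theb$-action of part~(\ref{e-h}).

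I do not expect a genuine conceptual obstacle: the choices of $Q_0$ and of $b_{ij}$ are made precisely so that these checks go through. The work is bookkeeping — in part~(\ref{e-h}) one must handle the cocycle encoded in $b_{ij}$ carefully so that the $e_i$ close into $\mathfrak n_+$ with the standard signs and the triple brackets vanish identically, and in part~(\ref{B-integrate}) one must confirm the integrality of the $h_i$-spectrum and the finiteness of the $L_0$-graded pieces; after that, the passage from the Lie algebra to the group is routine for a locally finite representation.
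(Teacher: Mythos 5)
Your proposal is correct and follows essentially the same route as the paper: the relations $\commut{h_i}{e_j}=c_{ij}e_j$ by direct OPE, the Serre relations by a conformal-weight/OPE-degree count that is only valid thanks to the modified braiding~(\ref{new-braid}) (the paper phrases this as a single weight-balance argument $1=2-c_{ij}+\Delta_P$ with $\Delta_P\geq0$ impossible, which is equivalent to your case-by-case pole count), and part~(\ref{B-integrate}) by decomposing the module into finite-dimensional $\theb$-submodules on which $e_i$ act nilpotently and $h_i$ with integer spectrum. Your write-up merely fills in details (the $L_0$-grading and the integrality role of the $-\tfrac{1}{\sqrt p}\scalar{\alpha_i}{\bar\lambda}$ shift) that the paper compresses into the one-line statement that $\modV_\lambda$ is a direct sum of integrable finite-dimensional $\theb$-representations.
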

\begin{Rem}\label{rem:mu}
We note that~(\ref{e-on-V}) and (\ref{h-on-V}) define an action of $\theb$ on 
$\voa_\ell(p)$ by infinitesimal symmetries. Therefore, we can construct a semidirect product
$U(\theb)\ltimes\voa_\ell(p)$ of the universal enveloping $U(\theb)$ of the Borel subalgebra
and the VOA $\voa_\ell(p)$.
Moreover, we can define on $\voa_\ell(p)$-modules 
$\modV_\lambda$ a $U(\theb)\ltimes\voa_\ell(p)$-module structure.
 We let $\modV_\lambda(\mu)$ denote the $U(\theb)\ltimes\voa_\ell(p)$-module
defined by~(\ref{e-on-V}) and (\ref{h-on-V}).
We introduce 1-dimensional $\theb$ module $\oneb(\mu)$, $\mu\in\theh^*$ on which $e_i$ acts by zero
and $h_i$ by multiplication with $\scalar{\alpha_i}{\mu}$. We note 
$\modV_\lambda(\mu)=\modV_\lambda(0)\otimes\oneb(\mu)$. We also use notation
$\modV_\lambda$ for $\modV_\lambda(0)$.
\end{Rem}
\begin{proof}[Proof of the Theorem~\ref{b-action}]
\begin{enumerate}
 \item   The relations $\commut{h_i}{e_j}=c_{ij}e_j$ are checked by simple calculation 
using (\ref{the-phi-ope}).

To check the Serre relations 
\begin{equation}
 \ad_{e_i}^{1-c_{ij}}e_j=0,\qquad i,j=1,\dots,\ell,\quad i\neq j
\end{equation}
we consider $\ad_{e_i}^{1-c_{ij}}e^{\sqrt{p}\varphi_i(z)}$.
We note that $\ad_{e_i}^{1-c_{ij}}e^{\sqrt{p}\varphi_i(z)}=
 P(\partial\varphi_k)e^{\sqrt{p}((1-c_{ij})\varphi_i(z)+\varphi_j(z))}$, where
$P$ is a differential polynomial in $\partial\varphi_k$, $k=1,\dots,\ell$.
This statement is true only when we chose the braiding~(\ref{new-braid}).
The polynomial $P$ should have a nonnegative conformal weight $\Delta_P$. The field
$\ad_{e_i}^{1-c_{ij}}e^{\sqrt{p}\varphi_i(z)}$ has conformal weight $1$ because
$e_i$ are screening operators and $e^{\sqrt{p}\varphi_i(z)}$ is a screening current.
A direct calculation with (\ref{theemt}) gives the conformal weight of
$e^{\sqrt{p}((1-c_{ij})\varphi_i(z)+\varphi_j(z))}$ being equal to $2-c_{ij}$,
which means that the balance of weights $1=2-c_{ij}+\Delta_P$ can not be satisfied
with a nonnegative weight~$\Delta_P$. Thus, $\ad_{e_i}^{1-c_{ij}}e^{\sqrt{p}\varphi_i(z)}=0$.

\item  The statement follows from the observation that $\voa_\ell(p)$ decomposes into
a direct sum of integrable finite dimensional representations of $\theb$.
\end{enumerate}
\end{proof}

\subsection{Bundles on the homogeneous space $\theG/\theB$}
We consider the bundle 
\begin{equation}
 \xi_\lambda(\mu)=\theG\times_{\theB}\modV_\lambda(\mu),\qquad\lambda\in\Lambda,\quad\mu\in\Gamma^\vee
\end{equation}
on the homogeneous space $\theF=\theG/\theB$, where the action of $\theB$ on $\theG$ is given 
by the right multiplication and on $\modV_\lambda$ by Theorem~\ref{b-action} with the 
corresponding~$\mu$. We set $\xi_\lambda=\xi_\lambda(0)$.
We let $\mathcal{O}(\mu)$ denote the standard 1-dimensional bundle on $\theF$ 
\begin{equation}
 \mathcal{O}(\mu)=\theG\times_{\theB}\oneb(\mu)
\end{equation}
with $\oneb(\mu)$ defined in Remark~\ref{rem:mu}.
\begin{Prop}\label{prop:xi-mu}
  \begin{equation}
 \xi_\lambda(\mu)=\xi_\lambda\otimes\mathcal{O}(\mu)
\end{equation}
\end{Prop}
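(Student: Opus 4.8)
The plan is to unwind the definition of the associated bundle and use the fact, recorded in Remark~\ref{rem:mu}, that $\modV_\lambda(\mu)=\modV_\lambda(0)\otimes\oneb(\mu)$ as $\theb$-modules, together with the corresponding statement that this isomorphism integrates to $\theB$ (which follows from part~(\ref{B-integrate}) of Theorem~\ref{b-action}, since both $\modV_\lambda(0)$ and $\oneb(\mu)$ carry integrated $\theB$-actions and the tensor product of integrated actions is integrated). First I would observe that for any two $\theB$-modules $M$ and $N$ with $N$ a character (one-dimensional), the natural map $\theG\times_{\theB}(M\otimes N)\to(\theG\times_{\theB}M)\otimes(\theG\times_{\theB}N)$ given fibrewise by $[g,m\otimes n]\mapsto[g,m]\otimes[g,n]$ is a well-defined isomorphism of bundles on $\theF=\theG/\theB$: well-definedness is the check that for $b\in\theB$ the two presentations $[gb^{-1},b\cdot(m\otimes n)]$ and $[gb^{-1},b\cdot m]\otimes[gb^{-1},b\cdot n]$ agree, which is immediate from $b\cdot(m\otimes n)=(b\cdot m)\otimes(b\cdot n)$; and it is an isomorphism because on each fibre it is the identity map $M\otimes N\to M\otimes N$.

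Applying this with $M=\modV_\lambda(0)$, $N=\oneb(\mu)$, and invoking $\modV_\lambda(\mu)=\modV_\lambda(0)\otimes\oneb(\mu)$, the left-hand side becomes $\xi_\lambda(\mu)=\theG\times_{\theB}\modV_\lambda(\mu)$ and the right-hand side becomes $(\theG\times_{\theB}\modV_\lambda(0))\otimes(\theG\times_{\theB}\oneb(\mu))=\xi_\lambda\otimes\mathcal{O}(\mu)$, by the definitions $\xi_\lambda=\xi_\lambda(0)$ and $\mathcal{O}(\mu)=\theG\times_{\theB}\oneb(\mu)$. This gives the claimed identification. One should also note, if one wants the identification to respect the VOA/module structures rather than merely the bundle structures, that $\oneb(\mu)$ is trivial as a VOA (the screenings act by zero and only the grading is twisted), so tensoring by $\mathcal{O}(\mu)$ does not disturb the $\voa_\ell(p)$-action along the fibres and only shifts the $\theh$-weights; this is exactly the content of the relation $\modV_\lambda(\mu)=\modV_\lambda(0)\otimes\oneb(\mu)$ in Remark~\ref{rem:mu}, so nothing new is needed.

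The only genuinely substantive point — and the one I would treat most carefully — is the compatibility of the $\theB$-actions: one must know not just that $\modV_\lambda(\mu)\cong\modV_\lambda(0)\otimes\oneb(\mu)$ as $\theb$-modules (which is visible from formulas~(\ref{e-on-V}) and~(\ref{h-on-V}), where changing $\mu$ only shifts $h_i$ by the scalar $\scalar{\alpha_i}{\mu}$ and leaves $e_i$ untouched), but that this is an isomorphism of integrated $\theB$-representations, so that forming $\theG\times_{\theB}(-)$ is legitimate and functorial. This is where Theorem~\ref{b-action}(\ref{B-integrate}) is invoked: it guarantees each $\modV_\lambda(\mu)$ is a genuine $\theB$-module, and since $\theB$ is connected the $\theb$-equivariant isomorphism automatically intertwines the group actions. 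After that observation the rest is the formal manipulation above, which I would present in one or two lines.
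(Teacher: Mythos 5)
Your argument is correct and is essentially the paper's proof written out in full: the paper dismisses the proposition as an immediate consequence of~(\ref{h-on-V}), which is exactly your key observation that changing $\mu$ only shifts the $h_i$-action by the scalar $\scalar{\alpha_i}{\mu}$, so that $\modV_\lambda(\mu)=\modV_\lambda(0)\otimes\oneb(\mu)$ and the associated bundle construction turns this into $\xi_\lambda\otimes\mathcal{O}(\mu)$. The additional care you take about the integrated $\theB$-action and the well-definedness of the fibrewise map is a sound elaboration of the same route, not a different one.
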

\begin{proof}
 An immediate consequence of (\ref{h-on-V}).
\end{proof}

\begin{Thm}\label{thm:cohomol}
 \begin{enumerate}
 \item $H^n(\xi_\lambda)=0$, for $n>0$\label{item1}
 \item $H^0(\xi_\lambda)$ is embedded into the fiber $\modV_\lambda$
of the bundle $\xi_\lambda$ over any point.
\end{enumerate}
\end{Thm}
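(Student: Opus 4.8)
The plan is to reduce to a finite-dimensional problem on each $L_0$-layer of $\modV_\lambda$. The screening operators $e_i$ have conformal weight zero --- their currents $e^{\sqrt{p}\varphi_i(z)}$ have weight $1$ --- and the operators $\oint\partial\varphi_i$ lie in the Heisenberg subalgebra; hence the $\theb$-action of Theorem~\ref{b-action}, together with its integration to $\theB$, commutes with $L_0$ and preserves the decomposition $\modV_\lambda=\bigoplus_n(\modV_\lambda)_n$ into finite-dimensional $L_0$-eigenspaces. Each $(\modV_\lambda)_n$ is therefore a finite-dimensional integrable $\theB$-module, and $\xi_\lambda=\bigoplus_n\xi_\lambda^{(n)}$ with $\xi_\lambda^{(n)}=\theG\times_\theB(\modV_\lambda)_n$ a direct sum of finite-rank $\theG$-equivariant vector bundles. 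Since $\theF$ is a projective, hence Noetherian, variety, sheaf cohomology commutes with this direct sum, so it suffices to prove both claims for each $\xi_\lambda^{(n)}$; write $W=(\modV_\lambda)_n$.

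\textbf{Step 2 (structural input --- the main obstacle).} Here I would show that $W$, as a $\theB$-module, fits into a finite exact complex
\[
0\longrightarrow W\longrightarrow E^0\longrightarrow E^1\longrightarrow\cdots\longrightarrow E^N\longrightarrow 0
\]
whose terms $E^j$ are restrictions to $\theB$ of finite-dimensional $\theg$-modules. This is the step that uses the concrete realization of $W$ inside the Fock modules $\modF_\alpha$: the terms $E^j$ and the differentials are finite truncations of the Felder-type complex built from the screenings $e_i$ --- the same complex that yields the characters in Section~\ref{sec:char} --- and its exactness away from degree $0$ is a Fock-space computation of the type carried out in~\cite{Felder} and systematised in~\cite{Arakawa}; the braiding~\eqref{new-braid} is precisely what makes the screenings assemble into such a complex. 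I expect the existence and exactness of this resolution to be the hard part.

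\textbf{Step 3 (formal consequences).} Granting Step 2, one applies the exact associated-bundle functor $\theG\times_\theB(-)$ to obtain an exact complex of bundles $0\to\xi_\lambda^{(n)}\to\theG\times_\theB E^0\to\cdots\to\theG\times_\theB E^N\to 0$. For a $\theg$-module $E$ the map $[g,v]\mapsto(g\theB,gv)$ trivialises $\theG\times_\theB E$, so each $\theG\times_\theB E^j$ is the trivial bundle with fibre $E^j$; in particular $H^n(\theG\times_\theB E^j)=0$ for $n>0$ and $H^0(\theG\times_\theB E^j)=E^j$. Breaking the complex into short exact sequences and chasing the associated long exact cohomology sequences then forces $H^n(\xi_\lambda^{(n)})=0$ for $n>0$, which is part~\ref{item1}. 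In degree $0$ the same chase identifies $H^0(\xi_\lambda^{(n)})$ with $\ker(E^0\to E^1)=W$, and unwinding the trivialisation shows that this identification is evaluation at the base point $e\theB$ followed by the inclusion $W\hookrightarrow E^0$; hence evaluation $H^0(\xi_\lambda^{(n)})\to W$ is injective. Any other point $x=g\,e\theB$ is carried to $e\theB$ by $g^{-1}\in\theG$, acting compatibly on global sections and on the bundle, so injectivity of evaluation holds over every point; summing over $n$ gives part~\ref{item2} (in fact this argument shows $H^0(\xi_\lambda)\cong\modV_\lambda$, but only the embedding is needed in the sequel).
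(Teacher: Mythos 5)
Your Step 1 is unobjectionable, but Step 2 --- which you correctly flag as the crux --- is not just unproven, it is false, and the failure is fatal to the whole strategy. If every $L_0$-graded piece $W=(\modV_\lambda)_n$ admitted a finite exact coresolution $0\to W\to E^0\to\cdots\to E^N\to0$ by restrictions of finite-dimensional $\theg$-modules, then your Step 3 chase would indeed give $H^{>0}(\xi_\lambda)=0$ \emph{and} $H^0(\xi_\lambda)\cong\modV_\lambda$, as you note in your closing parenthesis. But that conclusion is wrong, and the paper's own $A_1$ analysis (Lemma \ref{sl2-case}) shows why: $\modV_\lambda(0)$ is a nonsplit extension $0\to\modX_\lambda\to\modV_\lambda(0)\to\modX_{\alpha_1/\sqrt{p}-\lambda}\to0$ of $\theb$-modules in which the $\theb$-action on the submodule extends to $s\ell(2)$ (trivial subbundle), while on the quotient it extends only after tensoring with $\oneb(1)$; hence the quotient bundle is a trivial bundle tensored with $\mathcal{O}(-1)$ on $\oCP^1$, which has no cohomology at all, and $H^0(\xi_\lambda)=\modX_\lambda$ is a \emph{proper} subspace of the fiber $\modV_\lambda$ (this is also visible in the characters: $\chi_\lambda\neq\psi_\lambda$). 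Equivalently, a $\theB$-module such as the character $\oneb(-1)$ admits no coresolution by restrictions of $\theg$-modules (your own chase would force $H^0(\oCP^1,\mathcal{O}(-1))\cong\oC$, which is false), and precisely such "negative-twist" constituents occur inside the graded pieces of $\modV_\lambda$. So the resolution you need cannot exist, and your argument proves too much; Theorem \ref{thm:cohomol}(2) is an embedding, not an isomorphism, exactly because these constituents are killed in cohomology.

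The paper takes a different route, of Bott--Borel--Weil type: the $A_1$ computation above is done once and for all on $\oCP^1$ (Lemma \ref{sl2-case}), then for each simple root $\alpha$ the fibration $\theF=\theG/\theB\to\theG/\theP_\alpha$ with $\oCP^1$ fibers and the Leray spectral sequence give $H^i(\xi_\lambda)=H^{i+1}(\xi_{\sigma_\alpha\cdot\lambda})$ under the shifted reflection; iterating along a reduced word for the longest Weyl element pushes any putative higher cohomology above the dimension of $\theF$ and forces $H^i(\xi_\lambda)=0$ for $i>0$, with the degree-zero statement (embedding into the fiber) coming from the same fiberwise analysis. If you want to salvage your plan, the structural input in Step 2 would have to be replaced by this fiberwise $s\ell(2)$ filtration of $\modV_\lambda$ (trivial sub, $\mathcal{O}(-1)$-twisted quotient along each $\oCP^1$ direction), not by a global resolution by $\theg$-modules.
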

The proof of the Theorem is based on a calculation of the cohomologies in the $A_1$ case.
To do that we should recall notations from~\cite{FGST2}.
In~\cite{FGST2}, indecomposable modules of the quantum group $\overline{\mathscr{U}}_{\q} s\ell(2)$
were described. We need the following of them 
\begin{itemize}
 \item for $a=\pm$ and $s=1,\dots,p$, $\mathscr{X}^a_s$ are $s$ dimensional
 irreducible modules;
\item for $a=\pm$, $s=1,\dots,p-1$ and integer $n\geq2$, the module $\mathscr{M}^a_s(n)$
is an indecomposable module with socle $\oplus_1^n\mathscr{X}^a_s$ and the 
quotient~$\oplus_1^{n-1}\mathscr{X}^{-a}_{p-s}$;
\item for $a=\pm$, $s=1,\dots,p-1$ and integer $n\geq2$, the module $\mathscr{W}^a_s(n)$
is an indecomposable module with socle $\oplus_1^{n-1}\mathscr{X}^{-a}_{p-s}$ and the 
quotient~$\oplus_1^{n}\mathscr{X}^{a}_{s}$.
\end{itemize}
Taking an equivalence of $\overline{\mathscr{U}}_{e^{i\pi/p}} s\ell(2)$ and $\voaw[A]_1(p)$
representation categories~\cite{NTsu} into account we assume the same notations 
for the corresponding $\voaw[A]_1(p)$ modules. In what follows we need also
condensed notation.
For $\lambda=\frac{1-a}{2}\sqrt{p}\omega_1+(1-s)\lambda_1$, we set 
\begin{gather}
 \mathfrak{M}_\lambda(\mu)=\mathscr{X}^a_s, \qquad \mbox{\rm for $\mu=0$},\\ 
 \mathfrak{M}_\lambda(\mu)=\mathscr{M}^{-a}_{p-s}(\mu+1), \qquad \mbox{\rm for $\mu>0$},\\
 \mathfrak{M}_\lambda(\mu)=\mathscr{X}^{a}_{s}, \qquad \mbox{\rm for $\mu=-1$},\\
 \mathfrak{M}_\lambda(\mu)=\mathscr{W}^a_s(-\mu), \qquad \mbox{\rm for $\mu<-1$}
\end{gather}
We also set $\modX_\lambda=\mathscr{X}^a_s$.
\begin{Lemma}\label{sl2-case}
 In $A_1$ case we have
\begin{itemize}
 \item[{\rm for $\mu\geq0$}] 
  \begin{gather}\label{H0mu}
   H^0(\xi_\lambda(\mu))=\mathfrak{M}_\lambda(\mu),\\
   H^1(\xi_\lambda(\mu))=0,
  \end{gather}
\item[{}\ {\rm for $\mu\leq-1$}]
\begin{gather}
   H^0(\xi_\lambda(\mu))=0,\\
   H^1(\xi_\lambda(\mu))=\mathfrak{M}_\lambda(\mu),\label{H1mu}
  \end{gather}
\end{itemize}
\end{Lemma}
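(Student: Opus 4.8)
The plan is to reduce the statement to the cohomology of finite-rank $\theB$-equivariant bundles on $\oCP^1=\theG/\theB$ and then to read off $H^0$ and $H^1$ from the structure of the Fock modules $\modF_{\lambda+\alpha}$ under screenings described in~\cite{FGST2}.

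\emph{Reduction.} By Proposition~\ref{prop:xi-mu} we have $\xi_\lambda(\mu)=\xi_\lambda\otimes\mathcal{O}(\mu)$. Since the screening $e_1$ and the generator $h_1$ of~(\ref{h-on-V}) both commute with $L_0$, the $\theb$-action on $\modV_\lambda(\mu)$ respects the conformal grading, and because $\Delta_{\lambda+\alpha}\to\infty$ as $\alpha\to\infty$ this grading has finite-dimensional pieces bounded below. Hence $\modV_\lambda(\mu)$ is a direct sum of \emph{finite-dimensional} $\theb$-modules, $\xi_\lambda(\mu)$ is the corresponding direct sum of finite-rank $\theB$-equivariant bundles, and cohomology may be computed one piece at a time. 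On each piece Serre duality on $\oCP^1$ identifies $H^1(\xi_\lambda(\mu))$ with the dual of $H^0$ of the bundle attached to the contragredient $\theb$-module twisted by $\omega_{\oCP^1}$; by the Feigin--Fuchs duality of Fock modules this bundle is again of the form $\xi_{\lambda^\dagger}(\mu^\dagger)$, with $\mu^\dagger\geq0$ whenever $\mu\leq-1$. So it is enough to compute $H^0(\xi_\lambda(\mu))$ for $\mu\geq0$ and to check that $H^1$ vanishes there. I also record that $H^0(\xi_\lambda(\mu))$ is a rational $\theG$-module, via left translation on $\theG\times_{\theB}\modV_\lambda(\mu)$, so $e_1$ acts locally nilpotently on it.

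\emph{\v{C}ech computation.} Cover $\oCP^1$ by the big Bruhat cell $U_0$ and the complementary affine chart $U_\infty$, each a copy of $\oC$. Over $U_0$ the bundle is trivialised by the tautological section, so its sections are polynomials in the affine coordinate $z$ with values in $\modV_\lambda(\mu)$, and the change to the $U_\infty$-trivialisation is the twist by $\exp(z e_1)\,z^{h_1}$ acting on $\modV_\lambda(\mu)$. A section is global precisely when it is still polynomial after this twist; unwinding that condition forces the image of $H^0$ to be an $e_1$-locally finite, $\theh$-semisimple $\theb$-submodule of $\modV_\lambda(\mu)$ on which the length of the $e_1$-string through a vector is bounded in terms of its $h_1$-weight. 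Thus evaluation at the $\theB$-fixed point $eB$ embeds $H^0(\xi_\lambda(\mu))$ into the fibre $\modV_\lambda$ --- which reproves the second part of Theorem~\ref{thm:cohomol} in this case --- with image the $\theb$-submodule $N_\lambda(\mu)\subseteq\modV_\lambda(\mu)$ carved out by that inequality, and $H^1$ is the cokernel of the analogous gluing map.

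\emph{Identification.} It remains to recognise $N_\lambda(\mu)$. In $\modV_\lambda=\bigoplus_{\alpha\in\Gamma}\modF_{\lambda+\alpha}$ the long screening $e_1=\frac{1}{2\pi i}\oint e^{\sqrt p\,\varphi_1(z)}$ maps $\modF_{\lambda+\alpha}$ to $\modF_{\lambda+\alpha+p\alpha_1}$, so $\theb$ acts along the $e_1$-strings, and the twist by $\mathcal{O}(\mu)$ shifts all $h_1$-weights by $\scalar{\alpha_1}{\mu}$. Feeding in the description of these Fock modules from~\cite{FGST2} and the Felder-type maps~\cite{Felder} between consecutive ones under iterated screenings, the weight bound should cut out of the string exactly the $\voaw[A]_1(p)$-module $\mathfrak{M}_\lambda(\mu)$: for $\mu=0$ one is left with the irreducible $\modX_\lambda=\mathscr{X}^a_s$; for $\mu\geq1$ the longer admissible string reassembles the indecomposable $\mathscr{M}^{-a}_{p-s}(\mu+1)$; and in this range the cokernel computing $H^1$ is seen to vanish (equivalently, by the first step it is the dual of an $H^0$ in a vanishing range). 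The range $\mu\leq-1$ then follows by Serre duality, the contragredient pairings $\mathscr{X}^a_s\leftrightarrow\mathscr{X}^a_s$ and $\mathscr{M}^{-a}_{p-s}(n)\leftrightarrow\mathscr{W}^a_s(n)$ turning the above into~(\ref{H1mu}).

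\emph{Main difficulty.} The last step is the crux. Since $\modV_\lambda$ is highly non-semisimple as a $\theb$-module --- $e_1$ is not semisimple and the Fock spaces glue into the indecomposables $\mathscr{M}$ and $\mathscr{W}$ --- the \v{C}ech analysis alone gives only upper and lower bounds on $H^0$ and $H^1$. Turning these into the stated isomorphisms requires the precise Loewy structure of $\modV_\lambda$ under screenings from~\cite{FGST2} (this is where the $s\ell(2)$ quantum group at a root of unity enters) together with a check that no extension between Fock spaces contributes a spurious cohomology class. Essentially all of the work of the proof is concentrated here.
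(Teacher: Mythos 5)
Your outline stalls exactly where the content of the lemma lies, and you concede this yourself: the ``Identification'' step is phrased as ``the weight bound \emph{should} cut out \dots $\mathfrak{M}_\lambda(\mu)$'', and your final paragraph admits that the \v{C}ech analysis only yields upper and lower bounds and that essentially all of the work is concentrated in the unproved step. That step cannot be deferred: $\modV_\lambda(\mu)$ is infinite dimensional and highly non-semisimple along the $e_1$-strings, so the gluing condition by $\exp(ze_1)z^{h_1}$ does not by itself identify $H^0$ or $H^1$ with any of $\mathscr{X}^a_s$, $\mathscr{M}^a_s(n)$, $\mathscr{W}^a_s(n)$, nor does it prove the vanishing of $H^1$ for $\mu\geq0$. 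As written, the proposal is a strategy rather than a proof, and the Serre-duality reduction of $\mu\leq-1$ to $\mu\geq0$ adds a further unverified ingredient (the contragredient identifications of the Fock and triplet modules are only asserted).

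The missing idea --- the one the paper's proof rests on --- is to input the known $\voaw[A]_1(p)$-module structure of the fiber rather than trying to extract the $\theb$-module structure from the gluing data. Namely, there is an exact sequence
\begin{equation*}
 0\to\modX_\lambda\to\modV_\lambda(0)\to\modX_{\alpha_1/\sqrt{p}-\lambda}\to0,
\end{equation*}
the $\theb$-action on the submodule $\modX_\lambda$ extends to an $s\ell(2)$-action (Kausch), while on the quotient it extends only after tensoring with $\oneb(1)$. Consequently $\xi_\lambda(0)$ is an extension of $\left(\oCP^1\times\modX_{\alpha_1/\sqrt{p}-\lambda}\right)\otimes\mathcal{O}(-1)$ by the trivial bundle $\oCP^1\times\modX_\lambda$, and the whole computation collapses to the cohomology of $\mathcal{O}(\mu)$ and $\mathcal{O}(\mu-1)$ on $\oCP^1$ after twisting by $\mathcal{O}(\mu)$ via Proposition~\ref{prop:xi-mu}; the cases $\mu\leq-1$ come from the same sequence, with indecomposability of the resulting $\mathscr{M}$ and $\mathscr{W}$ read off from nontrivial maps to Verma modules. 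If you supply this extension-of-$\theb$-to-$s\ell(2)$ input, your \v{C}ech picture could be completed into an alternative argument; without it, the identification step has no support.
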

\begin{proof}
 The irreducible $\voa[A]_1(p)$-module $\modV_\lambda(0)$ is a reducible module 
of~$\voaw[A]_1(p)$~\cite{FGST1} and its structure can be described
by the following exact sequence
\begin{equation}\label{eq:ex-seq}
 0\to\modX_\lambda\to\modV_\lambda(0)\to\modX_{\alpha_1/\sqrt{p}-\lambda}\to0.
\end{equation}
The spaces $\modX_\lambda$ and $\modX_{\alpha_1/\sqrt{p}-\lambda}$
are irreducible $\voaw[A]_1(p)$-modules and at the same time they  
bear the action of $\theb$ induced by the action of $\theb$ on $\modV_\lambda(0)$.
%We let $\modX_\lambda(\mu)$ denote the corresponding bimodule. 
The action of~$\theb$ on $\modX_\lambda$ is extended to an action
of $s\ell(2)$~\cite{Kausch}. 
This means that the bundle $\xi_\lambda(0)$ contains the trivial subbundle
 $\bar\xi_\lambda(0)=\oCP^1\times\modX_\lambda$. 
We note that the action of~$\theb$ on the quotient $\modX_{\alpha_1/\sqrt{p}-\lambda}$
is not extended to an $s\ell(2)$ action but the action of~$\theb$ on 
$\modX_{\alpha_1/\sqrt{p}-\lambda}\otimes\oneb(1)$ is extended to the $s\ell(2)$ action.
For the quotient bundle we have
\begin{equation}
\xi_\lambda(0)/\bar\xi_\lambda(0)=\left(\oCP^1\times\modX_{\alpha_1/\sqrt{p}-\lambda}\right)
\otimes\mathcal{O}(-1).
\end{equation}
(The RHS means tensor product of the trivial bundle $\oCP^1\times\modX_{\alpha_1/\sqrt{p}-\lambda}$
with~$\mathcal{O}(-1)$.)
This bundle has zero cohomology because $H^i(\mathcal{O}(-1))=0$.
This shows the Lemma for $\mu=0$ and $\mu=-1$.
Other statements of the Lemma are obtained by multiplying with~$\mathcal{O}(\mu)$.
Obviously, we have $H^0(\xi_\lambda(\mu))=H^0(\xi_\lambda(0))\otimes\oC^{\mu}$
for~$\mu>0$ and $H^1(\xi_\lambda(\mu))=H^1(\xi_\lambda(-1))\otimes\oC^{-\mu}$
for~$\mu<0$. Indecomposability of modules appearing in the RHS of~(\ref{H0mu})
and~(\ref{H1mu}) follows from the observation that the cohomology has nontrivial
mappings on the corresponding Verma modules.
\end{proof}

\begin{proof}[Proof of the Theorem~\ref{thm:cohomol}]
The proof of the Theorem resembles a proof of Bott-Borel-Weyl theorem and 
we give only a brief description of it. 
Let $\sigma_\alpha$ be the Weyl group element corresponding to the simple root~$\alpha$.
Let $\sigma_{\alpha_1}\sigma_{\alpha_2}\dots\sigma_{\alpha_n}$ be the reduced decomposition
of the longest element in the Weyl group into the product of simple reflections.
We note that~$n$ is the dimension of the flag manifold~$\theF$. 
We define the shifted action of the Weyl
group~$\sigma_\alpha\cdot\omega=\sigma_\alpha(\omega+\sqrt{p}\rho)-\sqrt{p}\rho$.
Let $\theP_\alpha$ be the parabolic subalgebra
corresponding to the simple root $\alpha$ and $\theF_\alpha=\theG/\theP_\alpha$.
We note that $\theF$ is fibered over $\theF_\alpha$
with fibers $\oCP^1$. We let $\pi$ denote the projection $\theF\to\theF_\alpha$
and $\xi_\lambda^\alpha=\pi_*\xi_\lambda$ the direct image of~$\xi_\lambda$.

We consider two bundles $\xi_\lambda$ and $\xi_{\sigma_\alpha\cdot\lambda}$.
Their direct images $\xi_\lambda^\alpha$ and $\xi_{\sigma_\alpha\cdot\lambda}^\alpha$
are the same bundle on~$\theF_\alpha$. In more details,
taking Lemma~\ref{sl2-case} into account, for $\lambda\in\Lambda$, we obtain
$H^0(\left.\xi_\lambda\right|_{\oCP^1})=
H^1(\left.\xi_{\sigma_\alpha\cdot\lambda}\right|_{\oCP^1})\neq0$
and $H^1(\left.\xi_\lambda\right|_{\oCP^1})=
H^0(\left.\xi_{\sigma_\alpha\cdot\lambda}\right|_{\oCP^1})=0$.
This gives $H^i(\xi_\lambda)=H^{i+1}(\xi_{\sigma_\alpha\cdot\lambda})$
by the standard Leray spectral sequence.
We repeat this procedure for the longest element in the Weyl group
and obtain 
$H^i(\xi_\lambda)=H^{i+k}(\xi_{\sigma_{\alpha_1}\cdot\sigma_{\alpha_2}\cdot
 \dots\cdot\sigma_{\alpha_k}\cdot\lambda})$.
But this requires $H^i(\xi_\lambda)=0$ for~$i>0$.
\end{proof}
Taking Theorem~\ref{thm:cohomol} into account, we assume the following definition.
\begin{Dfn}
 The vacuum module $\modX_0$ of the vertex-operator algebra $\voaw_\ell(p)$ is $H^0(\xi_0)$.
\end{Dfn}
Other irreducible $\voaw_\ell(p)$-modules can be obtained as follows 
\begin{equation}\label{X-cohomol}
 \modX_\lambda=H^0(\xi_\lambda),\qquad\lambda\in\Lambda.
\end{equation}

\subsection{Generators of the vertex operator algebra $\voaw_\ell(p)$\label{sec:generators}}
The system of generators $\voaw_\ell(p)$ consists of two subsets.
The first subset of generators are generators of the W-algebra $\voahr_\ell(p)$ 
(see Sec.~\ref{sec:screenings}),
which is a subalgebra in~$\voaw_\ell(p)$. The algebra $\theg$ acts trivially on~$\voahr_\ell(p)$.
The second subset of generators span the adjoint representation of~$\theg$.

We describe the second subset of $\voaw_\ell(p)$ generators in details.
For the generators, we introduce notation $W^\alpha$ with $\alpha\in\roots$ 
 and $W^{0,\alpha}$ with $\alpha\in\sroots$.
Let $t^{\alpha}$ for $\alpha\in\roots$ be the basis vector from weight subspace with the weight $\alpha$ 
 and $t^{0,\alpha}$ with $\alpha\in\sroots$ be the basis in the zero weight subspace
 of the adjoint representation of $\theg$.
Let $\theta\in\roots$ be the lowest root of $\theg$, i.e.{} $\theta-\alpha\not\in\roots$ for 
any $\alpha\in\roots^+$. 
Then $t^{\theta}$ be the lowest weight vector in the adjoint representation of $\theg$.
Any root $\alpha\in\roots$ can be written in the
form
\begin{equation}
 \alpha=\theta+\sum_{i=1}^\ell n_i\alpha_i,\qquad n_i\in\oN_0.
\end{equation}
Then,
\begin{equation}\label{non-zero-weights}
 t^\alpha=\ad_{e_{j_1}}\ad_{e_{j_2}}\dots\ad_{e_{j_m}}t^{\theta}
\end{equation}
where $e_i$ appears presicely $n_i$ times and 
\begin{equation}\label{zero-weights}
 t^{0,\alpha}=\ad_{e_{j_1}}\ad_{e_{j_2}}\dots\ad_{e_{j_m}}t^{\theta}
\end{equation}
where $e_i$ appears presicely $a_i$ times with $a_i$ be labels in the Dynkin diagram.
Then, we set
\begin{equation}
 W^\alpha(z)=\ad_{e_{j_1}}\ad_{e_{j_2}}\dots\ad_{e_{j_m}}e^{\sqrt{p}\varphi_\theta(z)}
\end{equation}
with presicely the same product of adjoint operators as in (\ref{non-zero-weights})
and 
\begin{equation}
 W^{0,\alpha}(z)=\ad_{e_{j_1}}\ad_{e_{j_2}}\dots\ad_{e_{j_m}}e^{\sqrt{p}\varphi_\theta(z)}
\end{equation}
with presicely the same product of adjoint operators as in (\ref{zero-weights}).

The fields $W^\alpha(z)$ and $W^{0,\alpha}(z)$ have the same conformal dimensions equal to
\begin{equation}
 \frac{p}{2}c^{ij}\scalar{\alpha_i}{\theta}\scalar{\alpha_j}{\theta}+(1-p)\scalar{\rho}{\theta}=h(p-1)+1.
\end{equation}

\section{Quantum group\label{sec:QG}}
In this section we describe the quantum group $\bar{U}_\q(X_\ell)$.
We conjecture that its representation category is equivalent 
to the representation category of the vertex operator algebra $\voaw_\ell(p)$.

Suppose we have an algebra graded by $\theh^*$. We introduce q-bracket or q-adjoint action.
Let $x$ and $y$ are two homogeneous elements from the algebra, then
\begin{equation}
 \qad_xy=xy-\q^{\{\#x,\#y\}}yx,
\end{equation}
where $\q$ is given by (\ref{theq}), the scalar product $\{\cdot,\cdot\}$ is defined 
in~(\ref{new-scalar}) and $\#x\in\theh^*$ is the weight of $x$.
The most important example is the case, where $x$ and $y$ belong to the algebra 
generated by vertex operators and grading operators are commutators with zero modes
\begin{equation}
 h_i=\sqrt{p}(\varphi_{\omega_i})_0,\qquad i=1,\dots,\ell,
\end{equation}
where $\omega_i$ are fundamental weights.

The quantum group $\bar{U}_\q(X_\ell)$ is an associative algebra with 
generators $E_i$, $F_i$, $K_i$, $K_i^{-1}$ for $i=1,\dots,\ell$ and $1$.
This algebra is graded by the root lattice $\Gamma$ and weights of generators
are 
\begin{equation}
 \#E_i=\alpha_i,\quad\#F_i=-\alpha_i,\quad\#K_i=0.
\end{equation}
To describe the relations, we introduce elements
\begin{equation}
 L_i=\prod_{j=1}^\ell K_j^{c_{ij}}.
\end{equation}
The relations are
\begin{gather}
 E_i^p=F_i^p=0,\quad
 K_iK_j=K_jK_i,\quad L_i^{p}=1,\\
 K_iE_jK_i^{-1}=q^{2\delta_{ij}}E_j,\quad
 K_iF_jK_i^{-1}=q^{-2\delta_{ij}}F_j,\\
 \qad_{E_i}{F_j}=\delta_{ij}\frac{L_i-1}{\q-\q^{-1}},\\
 \qad_{E_i}^{1-c_{ij}}{E_j}=0,\quad\qad_{F_i}^{1-c_{ij}}{F_j}=0,\qquad i\neq j.
\end{gather}
This algebra is the centralizer of $\voaw_\ell(p)$ and we conjecture that the two algebras
have equivalent representation categories. We also note that for $\ell=1$ This algebra is
isomorphic to the quantum group from~\cite{FGST1}.

The algebra $\bar{U}_\q(X_\ell)$ is a braided Hopf algebra. To describe the braided Hopf 
algebra structure, we introduce the braided tensor 
product $\bar\otimes$.
The algebra $\bar{U}_\q(X_\ell)\bar\otimes\bar{U}_\q(X_\ell)$ differs from
$\bar{U}_\q(X_\ell)\otimes\bar{U}_\q(X_\ell)$ only by commutation relations between two
multipliers. In standard tensor product elements of the form $x\otimes1$ and $1\otimes y$
commute while elements $x\bar\otimes1$ and $1\bar\otimes y$ q-commute
\begin{equation}
 \qad_{x\bar\otimes1}1\bar\otimes y=0,
\end{equation}
where $\#(x\bar\otimes y)=\#x+\#y$. The comultiplication 
$\bar\Delta:\bar{U}_\q(X_\ell)\to\bar{U}_\q(X_\ell)\bar\otimes\bar{U}_\q(X_\ell)$ is given by
\begin{equation}
 \bar\Delta(K_i)=K_i\bar\otimes K_i
\end{equation}
and
\begin{equation}
 \bar\Delta(x)=x\bar\otimes1+1\bar\otimes x
\end{equation}
for $x=E_i$ or $F_i$.

At the end of the section, we note that the appearance of the braided Hopf algebra structure
insted of Hopf algebra structure probably solves a contradiction observed in~\cite{[KoSai]}.
In~\cite{[KoSai]} it was observed that the tensor product of 
$\overline{\mathscr{U}}_{\q} s\ell(2)$ modules is not necessarily commutative although
representation category of $\overline{\mathscr{U}}_{\q} s\ell(2)$ is equivalent~\cite{NTsu}
to the representation category of $(1,p)$ model vertex operator algebra which is braided tensor
category by construction.

\section{Characters\label{sec:char}}
In what follows we use notation
\begin{equation}
 \z^\alpha=\prod_{j=1}^\ell z_j^{\scalar{\alpha_j}{\alpha}},\qquad\alpha\in\theh^*.
\end{equation}
We also use notation
\begin{equation}
 \z[\alpha]=(z_1^{\scalar{\alpha_1}{\alpha}},z_2^{\scalar{\alpha_2}{\alpha}},\dots,
  z_\ell^{\scalar{\alpha_\ell}{\alpha}}),
\end{equation}
where in the RHS we have a list of monomials.
In particular, $\z[\rho]=(z_1,z_2,\dots,z_\ell)$.
For example, for a function $f$, $f(\z[\alpha])$ means
$f(z_1^{\scalar{\alpha_1}{\alpha}},z_2^{\scalar{\alpha_2}{\alpha}},\dots,
  z_\ell^{\scalar{\alpha_\ell}{\alpha}})$.
We also use notation 
\begin{equation}
\dd^\alpha=\prod_{j=1}^\ell \left(\frac{\partial}{\partial z_j}\right)^{\scalar{\alpha_j}{\alpha}}
\end{equation}
for the derivatives with respect to $z_i$.

We define the shifted action of the Weyl group $W$ on $\z$ by the formulas
\begin{equation}
 w(\z^\alpha)=\z^{w^{-1}\cdot\alpha},\quad w(f(\z[\alpha]))=f(\z[w^{-1}\cdot\alpha]),
\end{equation}
where $w\cdot\alpha=w(\alpha-\rho)+\rho$ and $f$ is a function in $\z$.

We introduce the denominator
\begin{equation}
 d(\z[\rho])=\prod_{\alpha\in\Delta^-}(1-\z^{\alpha})%=\sum_{w\in W}\epsilon(w)\z^{w(\rho)}.
\end{equation}
It satisfies
\begin{equation}
 w(d(\z[{\rho}]))=\epsilon(w)d(\z[\rho]),
\end{equation}
where $\epsilon(w)=(-1)^{\#w}$, where $\#w$ is the parity of $w$.
The Weyl formula for the character of the irreducible $\theg$-module with
the highest weight $\lambda$ is
\begin{equation}
 \chi^\theg_\lambda(\z^\rho)=\frac{1}{d(\z^{\rho})}\sum_{w\in W}\epsilon(w)w[\z^{\lambda}].
\end{equation}

Let V be a linear space equipped with an action of $L_0$ and $h_1,\dots,h_\ell\in\theh$.
Then the character of $V$ is
\begin{equation}
 \chi_{V}(q,\z^\rho)=\Tr_V\,q^{L_0-\frac{c}{24}}z_1^{h_1}z_2^{h_2}\dots z_\ell^{h_\ell}.
\end{equation}
In what follows we take $V$ be different VOA modules.

To write the characters we introduce the theta functions
\begin{equation}\label{theta}
 \Theta_\lambda(q,\z[\rho])=\sum_{\alpha\in\Gamma}
  q^{\frac{1}{2}\scalar{\sqrt{p}\alpha+\lambda-Q_0\rho}{\sqrt{p}\alpha+\lambda-Q_0\rho}}
  \z^{\alpha+\hat\lambda},\qquad\lambda\in\Lambda.
  %\prod_{j=1}^\ell z_j^{\scalar{\alpha_j}{\alpha+\frac{1}{\sqrt{p}}\lambda}}
\end{equation}
We note that these theta functions differs from~\cite{Kac-inf} by the factor~$\z^{\bar\lambda}$.
(See definitions of $\bar\lambda$ and $\hat\lambda$ in~(\ref{hlamb}) and~(\ref{blamb}).)

The character of the irreducible $\voa_\ell(p)$-module $\modV_\lambda$ is
\begin{equation}
 \psi_\lambda(q,\z[\rho])=\frac{\Theta_\lambda(q,\z[\rho])}{\eta(q)^\ell},\qquad\lambda\in\Lambda.
\end{equation}

To calculate the character $\chi_\lambda(q,\z[\rho])$ of the 
irreducible $\voaw_\ell(p)$-module $\modX_\lambda$
we use (\ref{X-cohomol}) and Lefschetz formula.
We defined $\modX_\lambda$ in~(\ref{X-cohomol}) so the character of $\modX_\lambda$ is
\begin{equation}
\chi_\lambda(q,\z[\rho])=\Tr_{H^0(\xi_\lambda)}\,q^{L_0-\frac{c}{24}}
z_1^{h_1}z_2^{h_2}\dots z_\ell^{h_\ell}=\sum_{w\in W}\Tr_{\modM_w}\,q^{L_0-\frac{c}{24}}
z_1^{h_1}z_2^{h_2}\dots z_\ell^{h_\ell}
\end{equation}
This expression is a sum over fixed points of the standard action of the torus $\theH$ on
the flag manifold $\theF$. The fixed points are in one to one correspondence with the elements
of the Weyl group~$W$. Contribution of each fixed point $x_w$ is determined in the following way.
We consider the space of sections of the bundle $\xi_\lambda$ in the formal neighborhood of~$x_w$.
Contribution of the point is $\Tr_{\modM_w}\,q^{L_0-\frac{c}{24}}
z_1^{h_1}z_2^{h_2}\dots z_\ell^{h_\ell}$. The space $\modM_w$ as a module of 
$L_0$ and $h_1$, \dots, $h_\ell$ is the tensor product $\modV_\lambda\otimes\mathcal{F}_{x_w}$,
where $\mathcal{F}_{x_w}$ is the ring of formal series in~$x_w$.
Thus,
\begin{equation}
 \Tr_{\modM_w}\,q^{L_0-\frac{c}{24}}z_1^{h_1}z_2^{h_2}\dots z_\ell^{h_\ell}=
 \frac{\psi_\lambda(q,\z[{w(\rho)}])}{d(\z[{w(\rho)}])}.
\end{equation}
Summarizing, we have
\begin{equation}
 \chi_\lambda(q,\z[\rho])=\sum_{w\in\W}
  \frac{\psi_\lambda(q,\z[{w(\rho)}])}{d(\z[{w(\rho)}])}=\frac{1}{d(\z[{\rho}])}
  \sum_{w\in\W}\epsilon(w)\psi_\lambda(q,\z[{w(\rho)}]).
\end{equation}
We rewrite this expression for characters in different form.
\begin{Thm}\label{thm:w-decomp}
 The character $\chi_\lambda(q,\z^\rho)$ of the irreducible $\voaw_\ell(p)$-module $\modX_\lambda$
can be written in the form
\begin{equation}\label{eq:w-decomp}
 \chi_\lambda(q,\z^\rho)=\sum_{\alpha\in\Gamma^+}\chi^\theg_{\hat\lambda+\alpha}(\z[\rho])
  \chi^{\mathcal{W}}_{\lambda,\alpha}(q),
\end{equation}
where $\Gamma^+$ is the intersection of the root lattice with the positive Weyl chamber,
$\chi^\theg_{\hat\lambda+\alpha}(\z^\rho)$ is the character of the irreducible $\theg$-module
with the highest weight $\hat\lambda+\alpha$ ($\hat\lambda$ is defined in (\ref{hlamb}))
and $\chi^{\mathcal{W}}_{\lambda,\alpha}(q)$ is the character of the irreducible
$\voahr_\ell(p)$-module given by~\cite{Arakawa}
\begin{equation}\label{W-char}
 \chi^{\mathcal{W}}_{\lambda,\alpha}(q)=\frac{1}{\eta(q)^\ell}\sum_{w\in W}
  q^{\frac{1}{2}\scalar{\sqrt{p}w(\alpha)+\lambda
  +\frac{1}{\sqrt{p}}\rho}{\sqrt{p}w(\alpha)+\lambda+\frac{1}{\sqrt{p}}\rho}}.
\end{equation}
\end{Thm}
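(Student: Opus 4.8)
The plan is to start from the closed form we have already derived,
\[
 \chi_\lambda(q,\z^\rho)=\frac{1}{d(\z[{\rho}])}
  \sum_{w\in\W}\epsilon(w)\,\psi_\lambda(q,\z[{w(\rho)}]),
\]
and to reorganize the sum so that the Weyl-symmetrization splits into a $\theg$-character part (depending only on $\z$) and a $\voahr_\ell(p)$-character part (depending only on $q$). First I would substitute the definition $\psi_\lambda=\Theta_\lambda/\eta^\ell$ and expand the theta function~(\ref{theta}) as a sum over $\alpha\in\Gamma$. Since $\eta(q)^\ell$ is Weyl-invariant, it factors out, and the numerator becomes
\[
 \sum_{w\in\W}\epsilon(w)\sum_{\alpha\in\Gamma}
  q^{\frac12\scalar{\sqrt{p}\alpha+\lambda-Q_0\rho}{\sqrt{p}\alpha+\lambda-Q_0\rho}}
  \,\z^{w^{-1}\cdot(\alpha+\hat\lambda)},
\]
using the shifted Weyl action on $\z$ from Sec.~\ref{sec:char}. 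The monomial exponent is $w^{-1}(\alpha+\hat\lambda-\rho)+\rho$; I would reindex $\alpha\mapsto w(\alpha)+(\text{shift})$ inside the lattice sum so that the quadratic form in the $q$-exponent becomes $\scalar{\sqrt{p}w(\alpha)+\lambda-Q_0\rho}{\cdots}$ — this is legitimate because $w$ permutes $\Gamma$ and because $-Q_0\rho = \frac{1}{\sqrt p}\rho - \sqrt p\,\rho$, so the shift by $\rho$ hidden in the dotted Weyl action exactly converts $-Q_0\rho$ into $+\frac{1}{\sqrt p}\rho$ after absorbing the $\sqrt p\,\rho$ term into the lattice variable. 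After this change of variables the $q$-exponent is $\frac12\scalar{\sqrt p\,w(\alpha)+\lambda+\frac{1}{\sqrt p}\rho}{\sqrt p\,w(\alpha)+\lambda+\frac{1}{\sqrt p}\rho}$, precisely the summand of~(\ref{W-char}), while the $\z$-exponent becomes $\z^{\alpha+\hat\lambda}$ with the $w$-dependence now carried only by $\epsilon(w)$ acting on... — more care is needed here, which I address next.

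The crucial bookkeeping step is to collect the lattice sum over $\alpha\in\Gamma$ into a sum over $\alpha\in\Gamma^+$ (the dominant part). I would write each $\alpha\in\Gamma$ as $\alpha = w'\cdot(\beta)$ for a unique $\beta$ in the closure of the positive chamber and $w'$ of minimal length (Weyl chamber decomposition, with the shifted dot-action so that elements on chamber walls are simply dropped — they contribute zero after antisymmetrization, exactly as in the proof of the Weyl character formula). Summing $\epsilon(w)w(\text{-})$ over $w\in\W$ against $\z^{\alpha+\hat\lambda}$ then produces, for each dominant $\beta$, the alternating sum $\sum_{w}\epsilon(w)\z^{w(\beta+\hat\lambda+\rho)-\rho}$, which after dividing by $d(\z[\rho])$ is by definition $\chi^\theg_{\beta+\hat\lambda}(\z[\rho])$ — here I use that $d(\z[\rho])$ is the Weyl denominator and that $\hat\lambda+\rho$ shifts the highest weight correctly, together with $\chi^\theg_\mu = 0$ whenever $\mu+\rho$ is singular. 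The $q$-dependent factor that multiplies $\chi^\theg_{\beta+\hat\lambda}$ is then exactly $\frac{1}{\eta^\ell}\sum_{w\in\W}q^{\frac12\scalar{\sqrt p\,w(\beta)+\lambda+\frac1{\sqrt p}\rho}{\cdots}}$, i.e.\ $\chi^{\mathcal W}_{\lambda,\beta}(q)$ as in~(\ref{W-char}). Renaming $\beta$ to $\alpha$ and summing over $\alpha\in\Gamma^+$ yields~(\ref{eq:w-decomp}).

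The main obstacle I anticipate is the simultaneous reindexing: I must perform a single change of variables $\alpha\mapsto w(\alpha)+(\text{const})$ in the lattice that correctly turns the background-charge shift $-Q_0\rho$ into $+\frac1{\sqrt p}\rho$ in the $q$-exponent \emph{and} at the same time turns the residual $w$-twist on the $\z$-exponent into the plain alternating sum defining a $\theg$-character. The delicate point is that the dot-action on $\z$ is $w(\z^\alpha)=\z^{w^{-1}\cdot\alpha}$ with the $\rho$-shift, whereas the lattice translation is by $\sqrt p\,\Gamma$ (no shift); reconciling the two shifts — the additive $\rho$ in the Weyl dot-action versus the $-Q_0\rho$ inside the quadratic form — is where sign and normalization errors would creep in, and I would check it first in the $A_1$ case against the known formulas of~\cite{FHST,Flohr2} before asserting the general identity. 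The identification of the $q$-factor with Arakawa's character~\cite{Arakawa} is then a direct comparison of~(\ref{W-char}) with the formula in~\cite{Arakawa}, and the vanishing of wall contributions is the standard argument from the Weyl character formula, so neither of those should present difficulty.
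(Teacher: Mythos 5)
Your overall strategy is the same one the paper tacitly uses: the paper offers no separate argument for Theorem~\ref{thm:w-decomp} beyond the phrase ``we rewrite this expression in a different form,'' so the intended proof is exactly what you outline --- start from $\chi_\lambda=d(\z[\rho])^{-1}\sum_{w}\epsilon(w)\psi_\lambda(q,\z[w(\rho)])$, expand $\Theta_\lambda$ over $\Gamma$, regroup the lattice sum into (shifted) Weyl orbits, recognize Weyl numerators in the $\z$-variables and collect the Gaussian $q$-exponents along each orbit, with singular orbits dropping out as in the Weyl character formula.

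However, the decisive bookkeeping step is asserted rather than performed, and as asserted it fails. When you regroup the double sum (antisymmetrization over $w$ times the lattice sum), each dominant $\alpha$ acquires the $q$-factor $\sum_{w}\epsilon(w)\,q^{E(\beta_w)}$ where $\beta_w+\hat\lambda+\rho=w(\alpha+\hat\lambda+\rho)$: the sign $\epsilon(w)$ necessarily travels with the $q$-Gaussians along the orbit and cannot all be absorbed into the $\z$-part. So the multiplicity of $\chi^\theg_{\hat\lambda+\alpha}$ comes out as an \emph{alternating} Weyl sum over a $\rho$-shifted orbit, not the unsigned sum you claim ``is precisely the summand of~(\ref{W-char}).'' Your own proposed $A_1$ test exposes this: for $\lambda=0$ the multiplicity space of the $(2n{+}1)$-dimensional $s\ell(2)$-module inside $\mathscr{X}^{+}_{1}$ is the irreducible Virasoro module with character $\bigl(q^{(p(2n+1)-1)^2/4p}-q^{(p(2n+1)+1)^2/4p}\bigr)/\eta(q)$, a difference of Gaussians with $\rho$-shifted argument, whereas the unsigned expression with $w(\alpha)$ and $+\tfrac{1}{\sqrt p}\rho$ gives $\bigl(q^{(2pn-1)^2/4p}+q^{(2pn+1)^2/4p}\bigr)/\eta(q)$, which for $\alpha=0$, $p>2$ does not even contain the vacuum exponent $(p-1)^2/4p$. (In other words, matching the derivation with the displayed~(\ref{W-char}) requires reinstating $\epsilon(w)$ and the shift $w(\alpha+\rho)$ there; your argument glosses over exactly the point where this becomes visible.) A second, smaller flaw: the device of ``absorbing the $\sqrt p\,\rho$ term into the lattice variable'' is not available in general, since $\rho\notin\Gamma$ already for $A_1$; the background-charge shift has to be handled through the $\rho$-shifted orbit decomposition, not by a lattice translation. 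Until the reindexing is carried out explicitly and the $A_1$ comparison is actually made, the proposal does not yet constitute a proof of the stated identity.
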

We note that (\ref{W-char}) are $\voahr_\ell(p)$-characters of general type, i.e.{}
irreducible modules with such characters exist for any value of the central charge.

The Theorem~\ref{thm:w-decomp} allows us to prove that $\voaw_\ell(p)$-module 
$\modX_\lambda$ is irreducible. Taking (\ref{eq:w-decomp}) into account,
we have 
\begin{equation}
 \modX_\lambda=\bigoplus_{\alpha\in\Gamma^+}\mathcal{R}_{\hat\lambda+\alpha}\otimes
  \mathfrak{Y}_{\lambda,\alpha},
\end{equation}
where $\mathcal{R}_{\hat\lambda+\alpha}$ is irreducible $\theg$ module and 
$\mathfrak{Y}_{\lambda,\alpha}$ is irreducible $\voahr_\ell(p)$ module.
Then the absence of invariant subspaces of $\voaw_\ell(p)$ action can be obtained
from a consideration of the action of generators from Sec.~\ref{sec:generators}.

We also give formulas for the characters of irreducible $\voaw_\ell(p)$ modules
adapted for calculating their modular properties and calculating values of characters
at $\z=1$. For this, we introduce derivatives of theta functions
\begin{equation}
 \Theta^{\omega}_\lambda(q)=\left.\dd^\omega\Theta_\lambda(q,\z)\right|_{\z=1},
   \qquad\omega\in\Gamma^{\vee+}.
\end{equation}
Then the characters can be written in the following nice form
\begin{equation}\label{char-in-theta}
 \chi_\lambda(q)=\frac{1}{\eta(q)^\ell}\sum_{\omega\in\Gamma^{\vee+}}
  \frac{\zeta_\omega}{\prod_{i=1}^\ell\left(\scalar{\alpha_i}{\omega}!\right)}
   \Theta^\omega_\lambda(q),
\end{equation}
where
\begin{equation}
 \zeta_\omega=
 \sum_{\mu\in\Gamma^{\vee}}(-1)^{\scalar{\mu}{\rho^\vee}}
 \prod_{j=1}^\ell\left({\scalar{\alpha_j}{\omega}\atop\scalar{\alpha_j}{\mu}}\right)
  \prod_{\alpha\in\Delta^+}\left(1+\frac{\scalar{\alpha}{\mu}}{\scalar{\alpha}{\rho}}\right),
   \qquad\omega\in\Gamma^{\vee+}
\end{equation}
and  $\left(n\atop m\right)$ are binomial coefficients.

\section{Conclusions}
A natural development of the results is a calculation of the $SL(2,\oZ)$ action
on the characters~(\ref{char-in-theta}). We belive that the $SL(2,\oZ)$ action 
will produce finite number of characters and pseudocharacters, whose
linear span will give a finite dimensional center $\bar{\mathcal{Z}}$ of the conformal
model. It is also interesting to calculate the center $\mathcal{Z}$ of the
quantum group $\bar{U}_\q(X_\ell)$ and the action of $SL(2,\oZ)$ on it.
We belive that $\bar{\mathcal{Z}}$ is isomorphic to $\mathcal{Z}$ as a representation
of~$SL(2,\oZ)$.

It is also interestingly to generalize the results of the paper to non simply laced case
and to the case of Lie superalgebras. Another interesting direction of investigations is
a generalization of our results to the case of the root lattice scaled not to $\sqrt{p}$
but to $\sqrt{\frac{p}{p'}}$. 
In the case of $A_1$, the algebra constructed in~\cite{FGST3} can be realized in the zero 
cohomology of a bundle on $\oCP^1$. Analogous construction can be done in the case of
arbitrary Lie algebra.
\subsubsection*{Acknowledgments}
We are grateful to A.A. Belavin for his kind attention to us.
The paper was supported by the RFBR-CNRS grant 09-02-93106. The work of BLF
was supported in part by the RFBR  initiative interdisciplinary project grant 09-02-12446-ofi\_m.
The work of IYuT was supported in part by the RFBR Grant No.08-02-01118.

\newpage

\appendix

\section{Group $\Lambda$\label{app:lambda}}
In the following picture long segments of strait lines are roots of $A_2$,
short segments of strait lines belong to the weight lattice of $A_2$, $\bullet$s
correspond to elements of the group $\Lambda$. Circled $\bullet$s correspond
to Steinberg modules (irreducible $\voaw_\ell(p)$ modules that coincide with
irreducible $\voa_\ell(p)$ modules).

\begin{picture}(400,400)(0,0)
% \put(200,200){\line(-2,1){100}}
 \put(200,200){\qbezier(0,0)(0,58)(0,116)}
 \put(200,200){\qbezier(0,0)(0,-58)(0,-116)}
\put(200,200){\qbezier(0,0)(-50.2,29)(-100.4,58)}
\put(200,200){\qbezier(0,0)(50.2,-29)(100.4,-58)}
 \put(200,200){\qbezier(0,0)(100,0)(200,0)}
\put(200,200){\qbezier(0,0)(-100,0)(-200,0)}
\put(200,200){\qbezier(0,0)(50,86)(100,174)}
\put(200,200){\qbezier(0,0)(-50,-86)(-100,-174)}
% \put(200,200){\line(4,7){58}}
 \put(200,200){\qbezier(0,0)(50,30)(100,58)}
 \put(200,200){\qbezier(0,0)(-50,-30)(-100,-58)}
\put(200,200){\qbezier(0,0)(-50,86)(-100,172)}
\put(200,200){\qbezier(0,0)(50,-86)(100,-172)}
\put(200,200){\circle*{5}}
\put(200,161){\circle*{5}}
\put(200,122.6){\circle*{5}}
\put(166.5,180.0){\circle*{5}}
\put(133,161){\circle*{5}}
\put(166.5,141.3){\circle*{5}}
\put(166.5,102.6){\circle*{5}}
\put(133,122.3){\circle*{5}}
\put(133,83.7){\circle*{5}}
\put(133,83.7){\circle{7}}
\put(100,58){\put(200,200){\circle*{5}}
\put(200,161){\circle*{5}}
\put(200,122.6){\circle*{5}}
\put(166.5,180.0){\circle*{5}}
\put(133,161){\circle*{5}}
\put(166.5,141.3){\circle*{5}}
\put(166.5,102.6){\circle*{5}}
\put(133,122.3){\circle*{5}}
\put(133,83.7){\circle*{5}}
\put(133,83.7){\circle{7}}}
\put(0,116){\put(200,200){\circle*{5}}
\put(200,161){\circle*{5}}
\put(200,122.6){\circle*{5}}
\put(166.5,180.0){\circle*{5}}
\put(133,161){\circle*{5}}
\put(166.5,141.3){\circle*{5}}
\put(166.5,102.6){\circle*{5}}
\put(133,122.3){\circle*{5}}
\put(133,83.7){\circle*{5}}
\put(133,83.7){\circle{7}}}
\end{picture}

\end{document}